 \newtheorem{thm}{Theorem}[section]
 \newtheorem{lem}[thm]{Lemma}
 \newtheorem{prop}[thm]{Proposition}
 \theoremstyle{definition}
 \theoremstyle{remark}
 \newtheorem{rem}[thm]{Remark}
 \numberwithin{equation}{section}
\newcommand{\nm}[1]{\|#1\|}								
\newcommand{\ip}[1]{\langle #1 \rangle}						
\newcommand{\inftysm}[1]{\sum_{#1}^{\infty}}					
\newcommand{\sm}[2]{\sum_{#1}^{#2}}						
\newcommand{\eqns}[1]{\begin{align*} #1 \end{align*}}		
\newcommand{\eqnslab}[1]{\begin{align} #1 \end{align}}		
\newcommand{\nn}{\nonumber}
\newcommand{\bracks}[1]{\left( #1 \right)}						
\newcommand{\curlybracks}[1]{\left\{ #1 \right\}}					
\newcommand{\modu}[1]{\left| #1 \right|}						
\newcommand{\Df}{\mathscr{D}_f}                                               
\newcommand{\Da}{\mathscr{D}_\alpha}
\newcommand{\fT}{\mathscr{T}}                     
\newcommand{\Mf}{\mathscr{M}_f}                                    
\newcommand{\Mtwo}{\mathcal{M}^2}
\newcommand{\Mctwo}{\mathcal{M}_c^2}
\newcommand{\Mcp}{\mathcal{M}_c^p}
\newcommand{\Mp}{\mathcal{M}^p}
\newcommand{\Mc}{\mathcal{M}_c}
\newcommand{\elp}{\ell^p}
\newcommand{\elq}{\ell^q}
\newcommand{\eltwo}{\ell^2}
\newcommand{\el}{\ell}
\newcommand{\N}{\mathbb{N}}
\newcommand{\C}{\mathbb{C}}
\newcommand{\R}{\mathbb{R}}
\newcommand{\Q}{\mathbb{Q}}
\newcommand{\Prime}{\mathbb{P}}
\newcommand{\Qplus}{\mathbb{Q^+}}
\newcommand{\conv}[2]{\bracks{#1 \ast #2}}
\begin{document}

\title{Bounded multiplicative Toeplitz operators \\ on sequence spaces}

\author{Nicola Thorn}
\date{}
\newcommand{\addressprint}{ Nicola Thorn \\ Department of Mathematics \\ University of Reading\\ Whiteknights \\ PO Box 22\\ Reading RG66AX\\
UK \\ email: n.j.b.thorn@pgr.reading.ac.uk}



\maketitle

\begin{abstract}
In this paper, we study the linear mapping which sends the sequence  $x=(x_n)_{n \in \N}$ to $y=(y_n)_{n \in \N}$ where $y_n = \inftysm{k=1}f(n/k)x_k$ for $f: \Qplus \to \C$. This operator is the multiplicative analogue of the classical Toeplitz operator, and as such we denote the mapping by $\Mf$. We show that for $1 \leq p \leq q \leq \infty$, if $f \in \el^r(\Qplus)$, then $\Mf:\elp \to \elq$ is bounded where $\frac{1}{r} = 1 - \frac{1}{p} + \frac{1}{q} $. Moreover, for the cases when $p=1$ with any $q$, $p=q$, and $q=\infty$ with any $p$, we find that the operator norm is given by $\nm{\Mf}_{p,q} = \nm{f}_{r,\Qplus}$ when $f \geq 0$. Finding a necessary condition and the operator norm for the remaining cases highlights an interesting connection between  the operator norm of $\Mf$ and elements in $\elp$ that have a multiplicative structure, when considering $f:\N \to \C$. We also provide an argument suggesting that $f \in \el^r$ may not be a necessary condition for boundedness when $1<p<q<\infty$.\noindent
\medskip

\textbf{Keywords:} bounded multiplicative Toeplitz operators, multiplicative sequences, sequence spaces
\noindent

\textbf{MSC (2010):} Primary 47B37; Secondary 47B35, 11N99
\end{abstract}

\section{Introduction}
  \label{S:1}

  In this paper, we study the multiplicative Toeplitz operator, denoted by $\Mf$, which sends a sequence $(x_n)_{n \in \N}$ to $(y_n)_{n \in \N}$ where
  \eqnslab{
  y_n = \inftysm{k =1} f\bracks{\frac{n}{k}}x_k,
\label{eqn:mapping}
  }
  and $f$ is a function defined from the positive rationals, $\Qplus$, to $\C$. We can think of $\Mf$ as being given by the infinite matrix $A_f$ whose entries are $a_{i,j} = 
  f(i/j)$ for $i,j \in \N$:

  \[
  A_f=
    \begin{pmatrix}
      f(1) & f(1/2) & f(1/3) & f(1/4) & \cdots \\
      f(2) & f(1) & f(2/3) & f(1/2) &  \cdots\\
      f(3) & f(3/2) & f(1) & f(3/4) & \cdots \\
      f(4) & f(2) & f(4/3) & f(1) & \cdots \\
      f(5) & f(5/2) & f(5/3) & f(5/4) & \cdots \\
      \vdots & \vdots & \vdots & \vdots & \ddots
    \end{pmatrix}
  \]
Characterised by matrices with constants on skewed diagonals, these mappings are the ``multiplicative" analogue of the vastly studied classical Toeplitz operators on sequence spaces. The topic of multiplicative analogues of Toep\-litz operators,  discussed in  \cite{Hilberdink2009}, \cite{Hilberdink2015} and \cite{codeca2008} for example, has grown in recent years, with the study of other multiplicative constructions; for example, \cite{Brevig2016} and  \cite{Perfekt2016} investigate the multiplicative Hankel matrix, otherwise known as Helson matrices.

Toeplitz operators, $\fT_\phi$, are most often studied via the function $\phi$, which is referred to as the symbol. In a similar manner, we shall be considering $\Mf$ in terms of the function $f$ and asking for which $f$ do certain properties hold\footnote{The symbol of $\Mf$ would be given by $F(t) = \sum_{q \in \Q} f(q)q^{it}$ where $t \in \R$}. By taking $f$ supported only on $\N$, we have $y_n = \sum_{d \mid n} f\bracks{\frac{n}{d}}x_d = \conv{f}{x}(n)$ where $\ast$ is Dirichlet convolution \cite{Apostol1976}. In this case, $A_f$ becomes a lower triangular matrix given by
  \eqns{
  A_f=
    \begin{pmatrix}
      f(1) & 0 & 0 & 0 & \cdots \\
      f(2) & f(1) & 0 & 0 &  \cdots\\
      f(3) & 0 & f(1) & 0 & \cdots \\
      f(4) & f(2) & 0 & f(1) & \cdots \\
      f(5) & 0 &0 & 0 & \cdots \\
      \vdots & \vdots & \vdots & \vdots & \ddots
    \end{pmatrix}
    }
We shall denote the mapping induced by this matrix by $\Df$.

Interesting connections to analytic number theory and many open questions have fuelled recent research. For example, in \cite{Hilberdink2009} the author illustrates a connection between these operators and the Riemann zeta function. Namely, by choosing $f$ to be supported on $\N$ where $f(n) = \frac{1}{n^\alpha}$ (denoted by $\Da$),  we have that $\Da : \eltwo \to \eltwo$ is bounded $\iff\alpha > 1 $, in which case $\nm{\Da}_{2,2 }= \zeta(\alpha)$. Thus when $\alpha \leq 1$, then $\Da$ is unbounded. By restricting the range of the mapping when $\alpha \in (\frac{1}{2},1]$ and considering
\eqns{
Y_\alpha(N) = \sup_{\nm{x}_2 = 1} \bracks{\sm{n=1}{N} \modu{y_n}^2}^{\frac{1}{2}},
}
it can be shown that $Y_\alpha (N)$ is a lower bound for the maximal order of the Riemann zeta function. Specifically, for $\alpha \in (\frac{1}{2},1)$
\eqns{
Z_{\alpha}(T) = \max_{t \in [0,T]} \modu{\zeta(\alpha + it)} \geq Y_\alpha \bracks{T^{2/3(\alpha - 1/2) - \epsilon}},
}
for sufficiently large $T$. Moreover, an estimate for $Y_\alpha(N)$ leads to
\eqns{
\log Z_\alpha(T) \gg \frac{(\log T)^{1-\alpha}}{\log \log T},
} a known estimate for the maximal order of $\zeta$. There have since been some improvements upon this estimate, and new estimates for the case when $\alpha = \frac{1}{2}$ have been found, which interestingly utilise a similar method \cite{Aistleitner2016}, \cite{Seip2017}. For other literature on the connections to the Riemann zeta function see also \cite{Hilberdink2013}, \cite{Hilberdink2015}.

The authors of \cite{codeca2008} also highlight an application of analytic number theory to these operators, by using the properties of smooth numbers to ascertain $\nm{\Df x}_{p,p} = \nm{f}_1$ when $f$ is expressible in terms of completely multiplicative and non-negative functions (see the preliminaries for definitions).

One can also consider the matrix properties of these mappings. For example, \cite{Hilberdink2006} considers the determinants of multiplicative Toeplitz matrices. By taking an $N \times N$ truncation, denoted by $A_f(N)$, the author is able to show that if $f$ is multiplicative, then the determinant of $A_f(N)$ can be given as a product over the primes up to $N$, of determinants of Toeplitz matrices.

In Section 2, we generalise results on the boundedness of $\Df$ contained in \cite{Hilberdink2009} and \cite{codeca2008}, giving a partial criterion for $\Mf$ to be bounded as a mapping from $\elp \to \elq$. In an attempt to find a full criterion, we present a relationship between the sets of multiplicative sequences and the operator norm $\nm{\Df}_{p,q}$ in Section 3.  By considering $\Df$ acting upon these subsets, we are able to give a further boundedness result which, due to this connection, indicates that the extension of the partial criterion may not hold. As such, we speculate whether the result can be generalised to $\Mf$ acting on $\elp$ spaces, which is then followed by a discussion on the existence of a possible counterexample to this generalisation. We end the paper with a summary of the open problems that arise within this paper, and also some unanswered questions which are concerned with other operator properties of multiplicative Toeplitz operators such as the spectral points of $\Mf$.

\subsection*{Preliminaries and notation}

\medskip
\noindent
\textbf{Sequences and arithmetic functions.} We use the terms ``sequences" (real or complex valued) and ``functions" interchangeably, as we can write any arithmetical function $f(n)$ as a sequence indexed by the natural numbers $f = (f_n)_{n \in \N}$.

\smallskip
  \noindent
  \textbf{Multiplicative functions.}  First, we say that $f$ (not identically zero) is multiplicative if  $f(nm) = f(n)f(m)$ for every $n,m \in \N$ such that $(n,m)=1$. Secondly, we say $f$ is completely multiplicative if this holds for all $n,m \in \N$. Finally, if $g(n) = cf(n)$ where $f$ is multiplicative, we call $g$ constant multiplicative.

\smallskip
   \noindent
   \textbf{Euler products.}
   If $f$ is multiplicative such that $\sum_{n \in \N} \modu{f(n)} < \infty$, then
   \eqns{
     \inftysm{n=1} f(n) = \prod_{t \in \Prime} \inftysm{k=1} f(t^k),
   }
   where $\Prime$ is the set of prime numbers. If $f$ is completely multiplicative, we can write \eqns{\inftysm{n=1}f(n) = \prod_{t \in \Prime} \frac{1}{1-f(t)}.}

\smallskip
  \noindent
  \textbf{GDC and LCM.} We use $(n,m)$ and $[n,m]$ to denote the greatest common divisor and least common multiple of $n$ and $m$ in $\N$, respectively.
  We let $d(n)$ stand for the number of divisors of $n$, including $1$ and $n$ itself.

\smallskip
\noindent
\textbf{O-notation.} We say that $f$ is of the order of $g$ and write $f = O(g)$ if, for some constant, $\modu{f(n)} \leq C \modu{g(n)}$ as $n \to \infty$. We also write $f \ll g$ to mean $f = O(g).$

\smallskip
\noindent
\textbf{Sequence spaces.}  For $p \in [1,\infty]$, let $\elp$ denote the usual  space of sequences $x =(x_n)_{n \in \N}$ for which the norm  $\nm{x}_p := \bracks{\inftysm{n=1} \modu{x_n}^p}^{1/p}$ converges or $\nm{x}_{\infty} = \sup_{n \in \N} \modu{x_n}$ exists (if $p \in [1,\infty) \text{ or } p = \infty$ respectively). We define $\elp(\Qplus)$ to be the space of sequences $x=(x_s)_{s \in \Qplus}$ for which $\nm{x}_{p,\Qplus} = (\sum_{s \in \Qplus} \modu{x_s}^p)^{1/p}$ converges or $\nm{x}_{\infty, \Qplus} = \sup_{s \in \Qplus} \modu{x_s}$ exists. For the case when $p=2$, we also have that $\ip{x,y} = \sum_{n \in \N} x_n \overline{y_n}$.

\smallskip
\noindent
\textbf{Operator norm.}  Given a bounded linear operator $L$, we use the usual notation $\nm{L}_{p,q}$ to denote the operator norm of $L : \elp \to \elq$ which is given by $\nm{L}_{p,q} = \sup_{\nm{x}_p = 1} \nm{L x}_q$.

   \section{Partial criterion for boundedness}
  \label{S:2}
The following results extend theorems contained in \cite{Hilberdink2009} and \cite{codeca2008}.
     \begin{thm} \label{thm:boundedonlp} For $1 \leq p \leq q \leq \infty$, define $r \in [1,\infty]$ by \[\frac{1}{r} = 1 - \frac{1}{p} + \frac{1}{q}\] where $\frac{1}{\infty} = 0$. If $f \in \el^r(\Qplus)$ then $\Mf: \elp \to \elq$ is bounded. More precisely, we have
                   \eqns{
                  \nm{\Mf x}_q \leq \nm{x}_p \nm{f}_{r,\Qplus}.
                  }
                  \end{thm}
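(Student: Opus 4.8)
The natural approach is to treat $\Mf$ as an integral-type operator against the "kernel" $f$ and apply a Schur-test / Young-convolution argument adapted to the multiplicative structure. First I would record the three endpoint cases, since the general case interpolates between them in spirit: (i) $p=q$, where $r=1$ and one wants $\nm{\Mf}_{p,p}\le\nm{f}_{1,\Qplus}$; (ii) $p=1$, where $\tfrac1r = \tfrac1q$, i.e. $r=q$; and (iii) $q=\infty$, where $\tfrac1r = 1-\tfrac1p$, i.e. $r=p'$. Writing $y_n=\sum_{k\ge1}f(n/k)x_k$, the key observation is that as $k$ ranges over $\N$ the argument $n/k$ ranges over (a subset of) $\Qplus$, and crucially, for each fixed $n$ the map $k\mapsto n/k$ is injective, while for each fixed rational $s$ the equation $n/k=s$ has for each $n$ at most one solution $k$ (and similarly viewing it with $n$ varying). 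This "each value of $f$ is used at most once per row and per column" property is what replaces translation-invariance in the classical Toeplitz setting and makes the convolution estimates work.

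For the main estimate I would use the generalized Young / Hölder approach: write $q'$ for the conjugate exponent of $q$ and note $\tfrac1{q} = \tfrac1p - \tfrac1{p} + \tfrac1q$ rearranges to the stated relation $\tfrac1r + \tfrac1{p'} = 1 + \tfrac1{q'}$ type balance; more concretely, split $|f(n/k)| = |f(n/k)|^{r/q}\cdot|f(n/k)|^{r(1-1/q)}$ and pair the first factor with $|x_k|$ appropriately, applying Hölder's inequality in $k$ with a three-exponent split (exponents $q$, $p$, and the exponent making everything balance) to bound $|y_n|$. Then raise to the $q$-th power, sum over $n$, and interchange the order of summation; the sum $\sum_n |f(n/k)|^{r}$ over the relevant $n$ for fixed $k$ is dominated by $\sum_{s\in\Qplus}|f(s)|^r = \nm{f}_{r,\Qplus}^r$ by the injectivity remark, and similarly the sum over $k$ for fixed $n$. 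Collecting exponents should yield exactly $\nm{\Mf x}_q\le \nm{x}_p\nm{f}_{r,\Qplus}$.

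The main obstacle I anticipate is purely bookkeeping: getting the three-fold Hölder split with the right exponents so that (a) the $x$-factor reassembles to $\nm{x}_p$, (b) one copy of $|f|^r$ summed over rows and one over columns each give $\nm{f}_{r,\Qplus}^r$, and (c) the leftover powers of $\nm{f}_{r,\Qplus}$ combine to total exponent $1$ — this is where the precise identity $\tfrac1r = 1-\tfrac1p+\tfrac1q$ must be invoked, and it is easy to misplace a $q$ versus $q'$. A secondary technical point is the endpoint cases $r=\infty$ (forcing $p=1$, $q=\infty$) and $q=\infty$ generally, where the $\ell^\infty$ norms must be handled by direct supremum estimates rather than the summation interchange; I would dispatch these separately at the start. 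I would also remark that convergence of the defining series for $y_n$ is justified a posteriori by the same Hölder bound, so no separate absolute-convergence argument is needed beyond noting the partial sums are Cauchy.
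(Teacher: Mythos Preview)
Your plan is essentially the paper's proof: a three-term H\"older split on $|f(n/k)\,x_k|$, then raise to the $q$th power, sum over $n$, interchange, and use the injectivity of $k\mapsto n/k$ (and of $n\mapsto n/k$) to bound the row and column sums of $|f|^r$ by $\nm{f}_{r,\Qplus}^r$; the $q=\infty$ endpoints are handled separately by direct estimates, exactly as you propose. The only slip is in your written factorisation of $|f|$: the exponents $r/q$ and $r(1-1/q)$ sum to $r$, not $1$; the paper's (correct) split is
\[
|f(n/k)\,x_k| \;=\; |f(n/k)|^{\,r(1-1/p)}\cdot\bigl(|f(n/k)|^{\,r/q}\,|x_k|^{\,p/q}\bigr)\cdot|x_k|^{\,1-p/q},
\]
with H\"older exponents $p'$, $q$, and $\tfrac{pq}{q-p}$ --- precisely the ``$q$ versus $q'$'' bookkeeping you anticipated.
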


  Theorem \ref{thm:boundedonlp} gives a partial criterion for boundedness between $\elp$ and $\elq$; partial in the sense that $f \in \el^r(\Qplus)$ is a sufficient condition. It is natural to ask whether this is also a necessary condition, i.e., does $\Mf:\elp \to \elq$ bounded imply that $f \in \el^r$? Moreover, can we find the operator norm, $\nm{\Mf}_{p,q}$? For $f$ positive,  both of these questions can be answered by Theorem \ref{thm:operatornormedgecases} for the cases where $p=q$, $p=1$ with any $q$, and $q=\infty$ with any $p$.  We refer to these as the ``edge" cases.

                  \begin{thm}
  \label{thm:operatornormedgecases}
   Let us define $r$ as in Theorem \ref{thm:boundedonlp}. For $p=q$, $p=1$ {\rm (}any $q${\rm )}, $q=\infty$ {\rm (}any $p${\rm )} with $f \in \el^r(\Qplus)$ positive, we have
              \eqns{
  						\nm{\Mf}_{p,q} = \nm{f}_{r,\Qplus} .
  					}
                    \end{thm}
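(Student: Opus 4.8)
The upper bound $\nm{\Mf}_{p,q}\le\nm{f}_{r,\Qplus}$ is precisely the content of Theorem~\ref{thm:boundedonlp}, so in each of the three ``edge'' cases the plan is simply to exhibit (near-)extremal sequences realising this value. The cases $p=1$ and $q=\infty$ are soft and I would dispatch them first: they reduce to the standard descriptions of the norm of a matrix operator out of $\elp$ when $p=1$, and into $\elq$ when $q=\infty$, together with an elementary covering fact about $\Qplus$. The case $p=q$ (where $r=1$) is where the real work lies, and I would treat it by a F\o lner-type test-sequence argument on the multiplicative group $\Qplus$.

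For $p=1$ one has $r=q$ and, since $\Mf$ is already known to be bounded, $\nm{\Mf}_{1,q}=\sup_{j\in\N}\nm{\Mf e_j}_q=\sup_{j\in\N}\bigl(\inftysm{i=1}\modu{f(i/j)}^q\bigr)^{1/q}$ (with the usual convention when $q=\infty$). I would then observe that $\sup_{j\in\N}\inftysm{i=1}\modu{f(i/j)}^q=\sum_{s\in\Qplus}\modu{f(s)}^q=\nm{f}_{q,\Qplus}^q$: the inequality ``$\le$'' holds because $\{i/j:i\in\N\}\subseteq\Qplus$ for every $j$, and for ``$\ge$'' one takes an arbitrary finite $S\subseteq\Qplus$ and lets $j$ be the least common multiple of the denominators of the members of $S$, so that $S\subseteq\{i/j:i\in\N\}$ and hence $\sum_{s\in S}\modu{f(s)}^q\le\inftysm{i=1}\modu{f(i/j)}^q$; letting $S\uparrow\Qplus$ finishes it. The case $q=\infty$ is dual: there $r=p'$ (the conjugate exponent of $p$), $\nm{\Mf}_{p,\infty}=\sup_{i\in\N}\nm{(f(i/j))_{j\in\N}}_{p'}=\sup_{i\in\N}\bigl(\inftysm{j=1}\modu{f(i/j)}^{p'}\bigr)^{1/p'}$, and the same argument — now choosing $i$ to be the least common multiple of the numerators of a finite $S\subseteq\Qplus$ — yields $\sup_{i\in\N}\inftysm{j=1}\modu{f(i/j)}^{p'}=\nm{f}_{p',\Qplus}^{p'}$. (Neither of these two cases in fact uses $f\ge0$.)

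For $p=q$ I must prove $\nm{\Mf}_{p,p}\ge\nm{f}_{1,\Qplus}$. First I would reduce to finitely supported $f$: for a finite $T\subseteq\Qplus$ let $f_T$ equal $f$ on $T$ and $0$ elsewhere; since $f\ge0$, for any non-negative $x$ we have $0\le(\fM_{f_T}x)_n\le(\Mf x)_n$ for all $n$, so $\nm{\Mf}_{p,p}\ge\nm{\fM_{f_T}}_{p,p}$, and taking $T\uparrow\Qplus$ it is enough to show $\nm{\fM_{f_T}}_{p,p}\ge\nm{f_T}_{1,\Qplus}$ for each finite $T$. Now let $p_1,\dots,p_s$ be the primes occurring in some numerator or denominator of a member of $T$, write each $q\in T$ uniquely as $\prod_{i=1}^s p_i^{a_i(q)}$ with $a_i(q)\in\Z$, and put $w=\max_{i,q}\modu{a_i(q)}$. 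For an integer $H>2w$ take $x$ to be the indicator of the ``box'' $B_H=\bigl\{\prod_{i=1}^s p_i^{e_i}:0\le e_i\le H\bigr\}\subseteq\N$, so $\nm{x}_p^p=(H+1)^s$. If $n=\prod_{i=1}^s p_i^{e_i}$ with $w\le e_i\le H-w$ for every $i$, then for each $q\in T$ the integer $n/q=\prod_{i=1}^s p_i^{e_i-a_i(q)}$ again lies in $B_H$, so that $(\fM_{f_T}x)_n=\sum_{q\in T}f(q)=\nm{f_T}_{1,\Qplus}$, the positivity of $f$ ruling out cancellation. There are $(H-2w+1)^s$ such indices $n$, hence $\nm{\fM_{f_T}x}_p^p\ge(H-2w+1)^s\,\nm{f_T}_{1,\Qplus}^p$ and $\nm{\fM_{f_T}}_{p,p}\ge\bigl((H-2w+1)/(H+1)\bigr)^{s/p}\nm{f_T}_{1,\Qplus}$; sending $H\to\infty$ gives $\nm{\fM_{f_T}}_{p,p}\ge\nm{f_T}_{1,\Qplus}$. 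Combined with the upper bound from Theorem~\ref{thm:boundedonlp}, this completes the proof.

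I expect the $p=q$ case to be the only genuine obstacle. The delicate point there is getting the constant to be exactly $\nm{f}_{1,\Qplus}$: one must test against indicators of multiplicative boxes sitting inside the finitely generated subgroup of $\Qplus$ carrying $\mathrm{supp}(f)$ (equivalently, against smooth numbers with bounded exponent vectors), and then verify that the ``inner box'' on which $\Mf$ reduces to multiplication by $\nm{f}_{1,\Qplus}$ — no truncation at the walls, no cancellation because $f\ge0$ — fills out a proportion $1-o(1)$ of the whole box as the side length $H$ grows, its complement being only an $O(H^{s-1})$ shell in a box of size $H^s$. The finite-support reduction needs a word of care, but it follows at once from the entrywise positivity of $\Mf$ on non-negative inputs.
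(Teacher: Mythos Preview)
Your argument is correct in all three cases. For $p=1$ and for $q=\infty$ you take essentially the same test vectors as the paper (a single basis vector $e_c$ in the first case, the Hölder-dual vector of a row in the second), but you package the computation as the standard identities $\nm{A}_{1,q}=\sup_j\nm{Ae_j}_q$ and $\nm{A}_{p,\infty}=\sup_i\nm{(a_{ij})_j}_{p'}$, and your ``LCM of denominators/numerators'' observation is a crisper version of the paper's choice $c=(2\cdot3\cdots T)^k$. The real divergence is in the diagonal case $p=q$. The paper does \emph{not} reduce to finitely supported $f$; it tests against the normalised indicator of the divisors of a carefully chosen integer $c=\prod_{t\le T}t^{[\log T/\log t]}$, pairs with $x^{q-1}$ via Hölder, and then proves the number-theoretic estimate $d(c/uv)/d(c)\to 1$ uniformly for $uv\le\sqrt{\log T}$, invoking the prime counting function along the way. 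Your route---truncate $f$ to a finite support using entrywise positivity, then run a Følner argument on the box $B_H=\{\prod p_i^{e_i}:0\le e_i\le H\}$ in the lattice generated by the primes appearing in $\mathrm{supp}(f_T)$---is genuinely different and more elementary: it replaces the divisor-function asymptotics by the trivial volume ratio $(H-2w+1)^s/(H+1)^s\to1$. The paper's method has the mild advantage of working directly with the full $f$ and exhibiting a single explicit near-extremiser, while yours isolates exactly where positivity enters (the domination $\fM_{f_T}x\le\Mf x$ for $x\ge0$ and the identity $\sum_{q\in T}f(q)=\nm{f_T}_{1,\Qplus}$) and makes the amenability of $\Qplus$ the visible mechanism.
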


  \begin{proof}[Proof of Theorem \ref{thm:boundedonlp}]
  Let $y_n$ be given by (\ref{eqn:mapping}). The proof proceeds by considering separate cases.

\medskip  				
\noindent	
$\bullet$  $1 \leq p \leq q < \infty$

\medskip
\noindent		
                  By H{\"o}lder's inequality,
  				\eqns{
  							\modu{y_n} &\leq \inftysm{k=1}\modu{f\bracks{\frac{n}{k}} x_k} = \inftysm{k=1} \modu{f\bracks{\frac{n}{k}}}^{r\bracks{1-\frac{1}{p}}} \modu{f\bracks{\frac{n}{k}}}^{\frac{r}{q}} \modu{x_k}^{\frac{p}{q}} \modu{x_k}^{1-\frac{p}{q}} \\
                              &\leq  \bracks{\inftysm{k=1} \modu{f\bracks{\frac{n}{k}}}^r}^{\bracks{1-\frac{1}{p}}} \bracks{\inftysm{k=1} \modu{x_k}^p}^{\frac{1}{p}-\frac{1}{q}} \bracks{\inftysm{k=1} \modu{f\bracks{\frac{n}{k}}}^{r} \modu{x_k}^{p}}^\frac{1}{q} \\
                              &\leq \nm{f}_{r,\Qplus}^{r\bracks{1-\frac{1}{p}}} \nm{x}_p^{1-\frac{p}{q}} \bracks{\inftysm{k=1} \modu{f\bracks{\frac{n}{k}}}^{r} \modu{x_k}^{p}}^\frac{1}{q}.
                                  }
Hence,
                              \eqns{
                              \inftysm{n=1} \modu{y_n}^q \leq \nm{f}_{r,\Qplus}^{rq\bracks{1-\frac{1}{p}}} \nm{x}_p^{q-p} \inftysm{n=1}\inftysm{k=1} \modu{f\bracks{\frac{n}{k}}}^{r} \modu{x_k}^{p}.
                              }
  Considering only the summation on the RHS above,
                              \eqns{
                              \inftysm{n=1}\inftysm{k=1} \modu{f\bracks{\frac{n}{k}}}^{r} \modu{x_k}^{p} \leq  \sum_{s \in \Qplus}  \modu{f(s)}^r \inftysm{k=1}  \modu{x_k}^p   = \nm{f}_{r,\Qplus}^r \nm{x}_p^p .        }
  Therefore,
  \eqns{
  \nm{\Mf x}_q^q = \inftysm{n=1} \modu{y_n}^q \leq \nm{f}_{r,\Qplus}^{qr\bracks{1-\frac{1}{p}} + r} \nm{x}_p^{q-p+p} = \nm{f}_{r,\Qplus}^{q} \nm{x}_p^{q}.
  }
  				$\bullet$ $p=1$ and $q = \infty$ (so $r = \infty $)

\medskip
\noindent	
                  By the triangle inequality,
                  \eqns{
  							\modu{y_n} \leq \inftysm{k=1}\modu{f\bracks{\frac{n}{d}} x_k} \leq \nm{f}_{\infty,\Qplus} \inftysm{k =1} \modu{x_k} \leq \nm{f}_{\infty,\Qplus} \nm{x}_1 .
  							     }	
                  Hence,
                  $
                   \nm{\Mf x}_\infty \leq \nm{f}_{\infty,\Qplus} \nm{x}_1.
                  $

                  \medskip
                  \noindent
                 $\bullet$ $q = \infty$ with $1 < p < \infty$ (so $r=\frac{p}{p-1}$)

\medskip
\noindent
                By H{\"o}lder's inequality, we have
  					\eqns{
  									\modu{y_n} \leq \inftysm{k=1}\modu{f\bracks{\frac{n}{k}} x_k} \leq \bracks{\inftysm{k=1} \modu{f\bracks{\frac{n}{k}}}^r}^{\frac{1}{r}} \bracks{\inftysm{k=1}\modu{x_k}^p}^{\frac{1}{p}} \leq \nm{f}_{r,\Qplus} \nm{x}_p. 								
  					}
                      Thus,
                      $
                      \nm{\Mf x}_\infty \leq \nm{f}_{r,\Qplus} \nm{x}_p.$

  \medskip
\noindent
                      $\bullet$ $p =q =\infty$ (so $r=1$)

\medskip
\noindent	
We now have
                                          $
                      \modu{y_n} \leq \nm{x}_\infty \inftysm{k=1}\modu{f\bracks{\frac{n}{k}}}\leq \nm{x}_\infty \nm{f}_{1,\Qplus},
                      $
                        which gives the desired inequality
                      $\nm{\Mf x}_\infty \leq \nm{x}_\infty \nm{f}_{1,\Qplus}.$
                      \end{proof}

\begin{proof}[Proof of Theorem \ref{thm:operatornormedgecases}]
We consider each edge case separately.

\medskip
\noindent
1. We first embark on the case when $p=1$ with any $q$.

\medskip
\noindent
$\bullet$ Let $q \in [1,\infty)$, so that $r=q$.

\medskip
\noindent
Fix $c \in \N$ and let $x_n = 1$ if $n=c$ and $0$ otherwise. Then $\nm{x}_1 = 1$ and so,
\eqns{
\modu{y_n}^q = \modu{\inftysm{k=1} f\bracks{\frac{n}{k}} x_k }^q = \modu{f\bracks{\frac{n}{c}}}^q.
}
Therefore,
\eqnslab{
\nm{\Mf x}_q^q &= \inftysm{n=1} \modu{y_n}^q = \inftysm{n=1} \modu{f\bracks{\frac{n}{c}}}^q = \sum_{d \mid c} \inftysm{\substack{n =1 \\ (n,c) = d}}\modu{f\bracks{\frac{n}{c}}}^q \nn \\
&= \sum_{d \mid c} \inftysm{\substack{m=1 \\ (m,\frac{c}{d})=1}} \modu{f\bracks{\frac{md}{c}}}^q \text{ by writing }n = md \nn \\
&= \sum_{d \mid c} \inftysm{\substack{m=1 \\ (m,d)=1}} \modu{f\bracks{\frac{m}{d}}}^q \text{ by writing }\frac{c}{d} \mapsto d.  \label{eqn:sumoverdenomc}
}
Note that we can write
\eqnslab{
\label{eqn:sumoverrationals}
\nm{f}_{q,\Qplus}^q = \sum_{s \in \Qplus}\modu{f\bracks{s}}^q =\inftysm{v=1} \inftysm{\substack{u =1 \\ (u,v) = 1}}\modu{f\bracks{\frac{u}{v}}}^q.
}
By computing the difference between (\ref{eqn:sumoverrationals}) and (\ref{eqn:sumoverdenomc}), we shall show that $\nm{\Mf x}_q$ can be made arbitrarily close to  $\nm{f}_{q,\Qplus}$. We have
 \eqns{
\inftysm{v=1} \inftysm{\substack{u =1 \\ (u,v) = 1}}\modu{f\bracks{\frac{u}{v}}}^q - \sum_{d \mid c}  \inftysm{\substack{n =1 \\ (n,c) = d}}\modu{f\bracks{\frac{n}{c}}}^q  = \sum_{\substack{u,v \in \N \\ (u,v) = 1 \\ v \nmid c}}\modu{f\bracks{\frac{u}{v}}}^q.
}
Now, choose $c = (2 \cdot 3 \cdot 5 \cdots T )^k$ where $k \in \N$ and $T$ is prime. Then if $v\nmid c \implies v > T$ for $k$ large enough. Therefore, for every $\epsilon > 0$, we can choose $T$ such that
\eqns{
\nm{f}_{q,\Qplus}^q - \nm{\Mf x}_q^q = \sum_{\substack{u,v \in \N \\ (u,v) = 1 \\ v  \nmid c }} \modu{f\bracks{\frac{u}{v}}}^q <  \epsilon.
}
Hence, $\nm{\Mf}_{1,q} = \nm{f}_{q,\Qplus}$ as required.

\medskip
\noindent
$\bullet $ Let $q=\infty$, so $r = q = \infty$.

\medskip
\noindent
 Fix $c \in \N$. Like before, choose $x_n = 1$ if $n=c$ and $0$ otherwise. Again $\nm{x}_1 = 1$. Now,
\eqns{
\nm{\Mf x}_\infty = \sup_{n \in \N} \modu{y_n} = \sup_{n \in \N} \modu{f\bracks{\frac{n}{c}}}.
}
Note here that there exist $u,v \in \N$ with $(u,v) = 1$ such that  $\nm{f}_{\infty,\Qplus} - \epsilon <  \modu{f\bracks{\frac{u}{v}}} $. Choose $n = u$ and $c = v$. Then
\eqns{
\nm{f}_{\infty,\Qplus}  - \nm{\Mf x}_\infty < \epsilon.
}


\medskip
\noindent
2. Now consider the edge case where $p=q$.

\medskip
\noindent
$\bullet$  Let $1 < p =q < \infty$ so $r=1$.

\medskip
\noindent
 Fix $c \in \N$. Choose $x_n = \frac{1}{d(c)^\frac{1}{q}}$ if $n \mid c$ and $0$ otherwise. Hence, $\nm{x}_q^q = \frac{1}{d(c)}\sum_{d \mid c} 1 = 1$. By H{\"o}lder's inequality,
\eqns{
\inftysm{n=1} x_n^{q-1} y_n \leq \bracks{\inftysm{n=1} \modu{x_n}^q}^{1-\frac{1}{q}} \bracks{\inftysm{n-1}y_n^q}^\frac{1}{q}  = \bracks{\inftysm{n-1}y_n^q}^\frac{1}{q} = \nm{\Mf x}_q.
}
Consequently, it suffices to show that $\inftysm{n=1} x_n^{q-1} y_n$  can be made arbitrarily close to $\nm{f}_{1,\Qplus}$. We have
\eqns{
\inftysm{n=1}x_n^{q-1}y_n &= \frac{1}{d(c)^{\frac{q-1}{q}}} \sum_{n \mid c} y_n =  \frac{1}{d(c)^{\frac{q-1}{q}}} \sum_{n \mid c} \sum_{k \mid c} f\bracks{\frac{n}{k}} x_k  \\
&= \frac{1}{d(c)} \sum_{n,k \mid c} f\bracks{\frac{n}{k}}.
}
We now follow the argument given in \cite{Hilberdink2015} (page 87). For $s =\frac{u}{v} \in \Qplus$,
\eqns{
\frac{1}{d(c)} \sum_{n,k \mid c} f\bracks{\frac{n}{k}} =  \frac{1}{d(c)} \sum_{s \in \Qplus} f(s) \sum_{\substack{n,k \mid c \\ s=\frac{n}{k}}} 1  = \frac{1}{d(c)} \sum_{u,v \in \N} f\bracks{\frac{u}{v}} \sum_{\substack{n,k \mid c \\ nv = uk }} 1,
}
where we used that $\frac{n}{k} = \frac{u}{v}$ if and only if  $nv = uk$. Since $(u,v)=1$ we have  $u \mid n$ and $v \mid k$,  and for any contribution to the summation on the RHS, we must have $u,v \mid c$, i.e., $uv \mid c$. Assume therefore, that $uv \mid c$. By writing $n=lu$ and $k=lv$ for some $l \in \N$, we get
\begin{eqnarray*}
& & \frac{1}{d(c)}\sum_{u,v \in \N} f\bracks{\frac{u}{v}} \sum_{\substack{n,k \mid c \\ nv = uk }} 1
= \frac{1}{d(c)} \sum_{u,v \in \N} f\bracks{\frac{u}{v}} \sum_{lu, lv  \mid c } 1\\
& & = \frac{1}{d(c)} \sum_{u,v \in \N} f\bracks{\frac{u}{v}} \sum_{l  \mid \frac{c}{uv} } 1
 =   \sum_{u,v \in \N} f\bracks{\frac{u}{v}} \frac{d\bracks{c/uv }}{d(c)} .
\end{eqnarray*}
Now, by choosing $c$ appropriately, we can show that  $\frac{d\bracks{c/uv }}{d(c)} $ can be made close to 1 for all $u,v$ less than some large constant. Fix $T \in \Prime$ and choose $c$ to be
\eqns{
c = \prod_{\substack{t \leq T \\ t \in \Prime}} t ^{\alpha_t} \;\:\text{ where }\;\: \alpha_t = \left[\frac{\log T}{\log t}\right].
}
If $uv  \mid c $, then $uv  = \prod_{t \leq T} t^{\beta_t}$ where $\beta_t \in [0,\alpha_t]$ and hence
\eqns{
\frac{d\bracks{c/uv }}{d(c)} = \prod_{t \leq T} \bracks{\frac{\alpha_t - \beta_t +1}{\alpha_t + 1}} = \prod_{t \leq T} \bracks{1 - \frac{\beta_t}{\alpha_t + 1}}.
}
If we take $uv \leq \sqrt{\log T}$, then $t^{\beta_t} \leq \sqrt{\log T}$ for every prime divisor $t$ of $uv $. Therefore, $\beta_t \leq \frac{\log \log T}{2\log t}$ and $\beta_t = 0$ if $t > \sqrt{\log T}.$ It follows that
\eqns{
\frac{d\bracks{c/uv}}{d(c)} &= \prod_{t \leq \sqrt{\log T}} \bracks{1 - \frac{\beta_t}{\alpha_t + 1}} \geq \prod_{t \leq \sqrt{\log T}} \bracks{1 - \frac{\log \log T}{2 \log T}} \\
&= \bracks{1 - \frac{\log \log T}{2 \log T}}^{\pi\bracks{\sqrt{\log T}}},
}
where $\pi\bracks{x}$ is the prime counting function up to $x$. As $\pi(x) \ll \frac{x}{\log x}$, we have for sufficiently large $T$,
\eqns{
 \frac{d\bracks{c/uv }}{d(c)} = \bracks{1 - \frac{\log \log T}{2 \log T}}^{\pi\bracks{\sqrt{\log T}}} \geq 1 - \frac{C}{\sqrt{\log T}},
}
for some  constant $C$. Therefore,
\eqns{
\sum_{u,v \in \N} f\bracks{\frac{u}{v}} \frac{d\bracks{c/uv }}{d(c)}
&>  \sum_{uv \leq \sqrt{\log T}} f(s) \bracks{1 - \frac{C}{\sqrt{\log T}}}   - \sum_{uv  > \sqrt{\log T}} f(q)  \\
 &\geq \sum_{s \in \Qplus} f(s) - \frac{C_1}{\sqrt{\log T}} - 2\sum_{uv  > \sqrt{\log T}} f(s),
}
as $f \in \el^1(\Qplus)$. By choosing $T$ to be arbitrarily large, for every $\epsilon > 0$, we have
\eqns{
\nm{f}_{1,\Qplus} - \nm{\Mf x}_q \leq \nm{f}_{1,\Qplus} - \inftysm{n=1} x_n^{q-1} y_n <  \epsilon.
}
\noindent
$\bullet$ We now consider the case where $p= q = \infty$, and so $r=1$.

\medskip
\noindent
 Let $x_n=1$ for all $n \in \N$ so that $\nm{x}_\infty = 1$. Moreover, for a fixed $c \in \N$, we have
\eqns{
\modu{y_c} = \inftysm{k=1}f\bracks{\frac{c}{k}} x_k = \inftysm{k=1} f\bracks{\frac{c}{k}}.
}
Again, by applying the same methods already shown, we conclude that $y_c$ can be arbitrarily close to $\nm{f}_{1,\Qplus}$. Hence, $\nm{\Mf}_{\infty,\infty} = \nm{f}_{1,\Qplus}$.

\medskip
\noindent
3. Finally, we consider the case when $q= \infty$ with any $p$. We have already dealt with the case when $p=1$ and $p=\infty$. So let $p \in (1,\infty)$, giving $r=\frac{p}{p-1}$.

\medskip
\noindent
Fix $c \in \N$, and let
\eqns{
x_n = f\bracks{\frac{c}{n}}^\frac{r}{p}F_c^{-\frac{1}{p}} \text{ where } F_c = \inftysm{n=1} f\bracks{\frac{c}{n}}^r \text{  exists as } f \in \el^r(\Qplus).
}
With this choice,
\eqns{
\nm{x}_p = \frac{1}{F_c} \inftysm{n=1} f\bracks{\frac{c}{n}}^r = \frac{F_c}{F_c} =1.
}
Now consider just the term $y_c$,
\eqns{
y_c = F_c^{-\frac{1}{p}} \inftysm{k=1} f\bracks{\frac{c}{k}}   f\bracks{\frac{c}{k}}^\frac{r}{p} =  F_c^{-\frac{1}{p}} \inftysm{k=1} f\bracks{\frac{c}{k}}^r,
}
as $1 + \frac{r}{p} = \frac{p-1 + 1}{p-1} = r$. Therefore,
\eqns{
y_c=  F_c^{1-\frac{1}{p}} = F_c^{\frac{1}{r}} = \bracks{\inftysm{k=1} f\bracks{\frac{c}{k}}^r}^{\frac{1}{r}}.
}
We can apply the same argument as before to show that for every $\epsilon > 0$, we can choose $c= (2 \cdot 3 \cdot 5 \cdots T)^k$ where $T$ is prime such that $y_c$ can be made arbitrarily close to $\nm{f}_{r,\Qplus}$. Hence, $\nm{\Mf}_{p,\infty} = \nm{f}_{r,\Qplus}$.
\end{proof}

\begin{rem} In \cite{Hilberdink2015}, the author showed that if $f$ is any, not necessarily strictly positive sequence, in $\el^1(\Qplus)$, then $\Mf: \eltwo \to \eltwo$ is bounded and the operator norm is given by
\eqns{
\nm{\Mf}_{2,2} = \sup_{t \in \R}\Bigg|\sum_{q \in \Qplus} f(q) q^{it}\Bigg|.
}
By assuming $f$ positive, the supremum of the above is attained when $t=0$, and as such $\nm{\Mf}_{2,2} = \nm{f}_{1,\Qplus}$ as given in Theorem \ref{thm:operatornormedgecases}. The differing operator norm when $f$ is not positive, is echoed in the work of \cite{codeca2008}, where an example is given showing that $\nm{\Df}_{p,p} \not = \nm{f}_{1}$.  Determining $\nm{\Mf}_{p,q}$ for any $f$ and general $p,q$ remains an open question, but is not however the focus of this paper.
\end{rem}

  \section{Connection with multiplicative sequences}

                      Generalising Theorem \ref{thm:operatornormedgecases} to find a necessary condition and the operator norm for all other $p$ and $q$ (which we will refer to as the interior cases) is challenging and is the focus of the proceeding discussions.

We start by taking $f$ supported on $\N$, i.e., $\Mf=\Df$. To understand the behaviour of the operator norm in the interior cases, we can consider where $\nm{\Df x}$ attains its supremum value in the edge cases.  First, setting $c=1$ in case 1 of the proof of Theorem \ref{thm:operatornormedgecases} yields the supremum of $\nm{\Df x}_{q}$. This gives $x_n = 1$ if $n =1$ and $0$ otherwise and as such $x$ is completely multiplicative. Secondly, for $1<p = q < \infty$ in case 2, we choose $x_n = \frac{1}{d(c)^{1/p}}$, whenever $n \mid c$, and 0 otherwise, which is a constant multiplicative sequence. Moreover, for $p=q=\infty$, the completely multiplicative sequence $x_n = 1$ (for all $n \in \N$) attains the operator norm. Finally, in case 3, for $f$ multiplicative, $x$ is again constant multiplicative.

It follows, for the edge cases, that $\Df$ is ``largest" when acting on a sequence $x \in \elp$ that has multiplicative structure.  Why this is the case is unclear and leads to a surprising connection between the operator norm of $\Df$ and the set of multiplicative elements in $\elp$, which we denote by $\Mp$. Moreover, we shall denote the set of completely multiplicative sequences in $\elp$ by $\Mcp$.  It is interesting to ask therefore how $\Df$ acts on these sets for $1 < p < q < \infty$, as from this connection, we would expect $\Df :\elp \to \elq$ to attain its supreme value here. Thus, we shall investigate the boundedness of $\Df: \Mcp \to \elq$ for $1 < p < q <\infty$, with the aim of giving some insight into $\nm{\Df}_{p,q}$ \footnote{$\Mcp$ and $\Mp$ are subsets, not subspaces of $\elp$. For example, they are not closed under addition. Given $X, Y$ which are subsets of some Banach space, we say $L:X \to Y$ is bounded $\iff \nm{Lx} \leq C\nm{x}$ for all $x \in X.$}.

From Theorem \ref{thm:boundedonlp}, it follows that $\Df: \Mcp \to \elq$ is bounded if $f \in \el^r$. We wish to know whether this is also a necessary condition. In Theorem \ref{thm:boundedonmt}, we  show that for $f$ completely multiplicative, the requirement that $f$ be $\Mc^r$ is not a necessary condition for $\Df: \Mcp \to \Mtwo$ to be bounded\footnote{The convolutions of two multiplicative sequences is also multiplicative, so we can consider $y \in \Mtwo$. } when $p \in (1,2)$ and $q=2$. One can speculate therefore that $f \in \el^r$ is not a necessary condition when considering $\Df:\elp \to \eltwo$.

  \begin{thm}
  \label{thm:boundedonmt}
Let $1 < p < 2$.  If $f \in \Mctwo$, the mapping $\Df:\Mcp \to \Mtwo$ is bounded.
 \end{thm}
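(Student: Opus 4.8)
The plan is to use the (completely) multiplicative structure of $f$, of $x$, and of Dirichlet convolution to factor the whole inequality over the primes $t\in\Prime$. Write $\alpha_t:=\modu{f(t)}$ and $\beta_t:=\modu{x(t)}$; since $f\in\eltwo$ and $x\in\elp$ are completely multiplicative, $\alpha_t,\beta_t\in[0,1)$, and the Euler product gives $\nm{f}_2^2=\prod_t(1-\alpha_t^2)^{-1}$ (so $f\in\Mctwo$ is precisely the condition $\sum_t\alpha_t^2<\infty$) and $\nm{x}_p^p=\prod_t(1-\beta_t^p)^{-1}$. Because $f$ is supported on $\N$, $y:=\Df x=\conv{f}{x}$ is multiplicative, so $\modu{y(n)}^2$ is non-negative and multiplicative and $\nm{y}_2^2=\prod_t S_t$ with $S_t:=\inftysm{k=0}\modu{y(t^k)}^2$. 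Hence it suffices to find $C=C(f,p)$ with $\prod_t S_t\le C^2\prod_t(1-\beta_t^p)^{-2/p}=C^2\nm{x}_p^2$ for every completely multiplicative $x\in\elp$, and this reduces to a single per-prime estimate.

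First I would compute $S_t$. Since $y(t^k)=\sm{j=0}{k}f(t)^j x(t)^{k-j}$, we have $\modu{y(t^k)}\le\sm{j=0}{k}\alpha_t^j\beta_t^{k-j}$, and a routine geometric-series computation gives the closed form
\eqns{
\inftysm{k=0}\bracks{\sm{j=0}{k}\alpha_t^j\beta_t^{k-j}}^2=\frac{1+\alpha_t\beta_t}{(1-\alpha_t^2)(1-\beta_t^2)(1-\alpha_t\beta_t)},
}
whence $S_t\le\frac{1+\alpha_t\beta_t}{(1-\alpha_t^2)(1-\beta_t^2)(1-\alpha_t\beta_t)}$. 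Taking logarithms and setting $E_t:=\log(1-\beta_t^2)-\tfrac{2}{p}\log(1-\beta_t^p)$, the task becomes to bound $\sum_t\bigl[\log\tfrac{1+\alpha_t\beta_t}{1-\alpha_t\beta_t}-\log(1-\alpha_t^2)-E_t\bigr]$ from above by a quantity depending only on $f$ and $p$, uniformly in $x$.

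Two elementary facts, both genuinely using $1<p<2$, make this work. (i) $E_t\ge 0$ for all $\beta\in[0,1)$; equivalently $(1-\beta^2)^{p/2}\ge 1-\beta^p$, which follows from the subadditivity $(u+v)^{p/2}\le u^{p/2}+v^{p/2}$ of $t\mapsto t^{p/2}$ (valid as $p/2<1$) with $u=1-\beta^2$, $v=\beta^2$. (ii) There is a finite $K=K(p)$ with $\beta^2\le K\,E(\beta)$ for all $\beta\in[0,1)$, where $E(\beta)=\log(1-\beta^2)-\tfrac{2}{p}\log(1-\beta^p)$: the ratio $\beta^2/E(\beta)$ is continuous and positive on $(0,1)$, tends to $0$ as $\beta\to 0$ (since $E(\beta)\sim\tfrac{2}{p}\beta^p$ while $\beta^2=o(\beta^p)$) and as $\beta\to 1$ (since $E(\beta)\to+\infty$, the coefficient $1-\tfrac{2}{p}$ of the divergent term being negative). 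Granting these: for all but the finitely many primes with $\alpha_t>\tfrac12$, $\log\tfrac{1+\alpha_t\beta_t}{1-\alpha_t\beta_t}\le 4\alpha_t\beta_t$, and by the arithmetic--geometric mean inequality followed by (ii), $4\alpha_t\beta_t\le 8K\alpha_t^2+(2K)^{-1}\beta_t^2\le 8K\alpha_t^2+\tfrac12 E_t$; so the bracketed term is at most $8K\alpha_t^2-\log(1-\alpha_t^2)-\tfrac12 E_t\le 8K\alpha_t^2-\log(1-\alpha_t^2)$ by (i). For the finitely many primes with $\alpha_t>\tfrac12$, use $\alpha_t\beta_t<\alpha_t$ and $E_t\ge 0$ to bound the bracket by $\log\tfrac{1+\alpha_t}{1-\alpha_t}-\log(1-\alpha_t^2)$, a finite constant. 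Summing over $\Prime$ and using $\sum_t\alpha_t^2<\infty$ yields $\prod_t S_t(1-\beta_t^p)^{2/p}\le C^2$ with $C^2$ depending only on $f$ and $p$ (essentially $\nm{f}_2^2\exp\bigl(8K\sum_t\alpha_t^2\bigr)$, up to the finitely many exceptional factors); hence $\nm{\Df x}_2\le C\nm{x}_p$.

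The main obstacle is the absorption in step (ii). The cross-terms $\sum_t\alpha_t\beta_t$ are not bounded uniformly over $x\in\Mcp$ — they grow with $\nm{x}_p$, and $\sum_t\alpha_t$ itself can diverge even though $\sum_t\alpha_t^2<\infty$ — so the naive estimate only gives something like $\nm{\Df x}_2\ll\nm{x}_p^{1+o(1)}$, short of boundedness. Fact (ii) says precisely that the local gain $(1-\beta_t^p)^{2/p}/(1-\beta_t^2)$ collapses to $0$ quickly enough as $\beta_t\to 1$ to pay for the cross-term $\alpha_t\beta_t$ and leave only the $\ell^2$-summable residue $\alpha_t^2$; this is where $p<2$ is essential and is the heart of the argument. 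Everything else — the Euler-product bookkeeping and the series identity for $S_t$ — is routine.
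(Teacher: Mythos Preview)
Your argument is correct and arrives at the same Euler-product reduction as the paper, but the two proofs diverge in how they set up and execute the per-prime estimate. The paper first proves a general inner-product identity for convolutions of four completely multiplicative functions (its Lemma 3.2), specialises it to obtain the exact formula $\nm{\Df x}_2 = \nm{f}_2\nm{x}_2\modu{\ip{f,x}}/\nm{fx}_2$, and then crudely drops the denominator via $\nm{fx}_2\ge 1$. You instead compute the local factor $S_t$ directly by summing the geometric series, which yields the same Euler factor $(1+\alpha_t\beta_t)/[(1-\alpha_t^2)(1-\beta_t^2)(1-\alpha_t\beta_t)]$ without the detour through the lemma.

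The more substantive difference is in absorbing the cross term $\alpha_t\beta_t$. The paper first replaces $(1-\beta_t^2)^{-1/2}$ by $(1-\beta_t^p)^{-1/2}$, takes logarithms, then \emph{splits the sum by sign}: only on the primes where $\modu{x_t}\le\bigl(\tfrac{2p}{2-p}\modu{f(t)}\bigr)^{1/(p-1)}$ does the summand contribute positively, and on that range one bounds the sum by $\sum_t\modu{f(t)}^{p/(p-1)}$, which converges because $p/(p-1)>2$. Your route avoids any case split on sign: you isolate the ``gain'' $E_t=\log(1-\beta_t^2)-\tfrac{2}{p}\log(1-\beta_t^p)\ge 0$, prove the uniform one-variable inequality $\beta^2\le K(p)\,E(\beta)$, and then use AM--GM to trade $4\alpha_t\beta_t$ for $8K\alpha_t^2+\tfrac12 E_t$, landing directly on the $\ell^2$-summable quantity $\sum_t\alpha_t^2$. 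Both arguments exploit $p<2$ in an essential way (the paper through $p/(p-1)>2$, you through $\beta^2/E(\beta)\to 0$ at both endpoints of $[0,1)$), but yours is arguably more self-contained, while the paper's Lemma 3.2 is a cleaner structural identity that may have independent interest.
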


To highlight the difference between this criterion and that shown in the previous section, we consider the following example. Let $f(n) = \frac{1}{n^\alpha}$ for $\alpha >\frac{1}{2}$  and let $p = \frac{3}{2}$, giving $\frac{1}{r} = 1 -\frac{2}{3} + \frac{1}{2} = \frac{5}{6}$. Theorem \ref{thm:boundedonlp} states that if $\alpha >  \frac{5}{6}$, then $\Df:\Mc^\frac{3}{2} \to \eltwo$ is bounded. In contrast, Theorem \ref{thm:boundedonmt} shows that only $\alpha > \frac{1}{2}$ is required for boundedness. For the proof of Theorem \ref{thm:boundedonmt}, we will require the following lemma, which will be proved below.

 \begin{lem}
  Let $f,g,h,j \in \Mctwo$. Then,
  \eqnslab{
  \label{eqn:innerproductconvolution}
  \ip{f\ast g, h \ast j} &=  \frac{\ip{g,j}\ip{f,h} \ip{f,j}\ip{g,h}}{\ip{fg,hj}}.
  }
   \label{lem:productconv}
  \end{lem}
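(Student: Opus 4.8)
The plan is to reduce the identity to a prime-by-prime computation via Euler products, exploiting that Dirichlet convolutions and pointwise products of (completely) multiplicative sequences are again multiplicative. First I would record the preliminaries. For $f \in \Mctwo$, applying the Euler product formula to the completely multiplicative nonnegative function $n \mapsto \modu{f(n)}^2$ gives $\nm{f}_2^2 = \prod_t (1-\modu{f(t)}^2)^{-1}$, which forces $\modu{f(t)}<1$ for every prime $t$ and $\sum_t \modu{f(t)}^2 < \infty$; the same holds for $g$, $h$, $j$. Consequently each of $\ip{f,h}$, $\ip{f,j}$, $\ip{g,h}$, $\ip{g,j}$ converges absolutely (using $\modu{f(t)\overline{h(t)}} \le \tfrac12(\modu{f(t)}^2+\modu{h(t)}^2)$); the sequences $fg$ and $hj$ are completely multiplicative elements of $\eltwo$, so $\ip{fg,hj}$ is well defined and nonzero; and a short estimate on $(f\ast g)(t^k)=\sum_{i=0}^{k} f(t)^i g(t)^{k-i}$ shows $f\ast g, h\ast j \in \eltwo$, so the left-hand side converges absolutely as well. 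Note also that $f\ast g$ and $h\ast j$ are multiplicative.

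Next I would pass to Euler products. Applying the formulas from the preliminaries to the multiplicative function $n\mapsto (f\ast g)(n)\overline{(h\ast j)(n)}$ and to the completely multiplicative functions of the form $f(n)\overline{h(n)}$ and $f(n)g(n)\overline{h(n)j(n)}$, every quantity in the statement becomes an Euler product:
\eqns{
\ip{f\ast g,\, h\ast j} &= \prod_t \sum_{k\ge0} (f\ast g)(t^k)\,\overline{(h\ast j)(t^k)}, \\
\ip{f,h} &= \prod_t \frac{1}{1-f(t)\overline{h(t)}}, \qquad \ip{fg,hj} = \prod_t \frac{1}{1-f(t)g(t)\overline{h(t)}\,\overline{j(t)}},
}
with analogous products for $\ip{g,j}$, $\ip{f,j}$, $\ip{g,h}$. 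Hence it suffices to prove, for each prime $t$ and with the abbreviations $a=f(t)$, $b=g(t)$, $u=\overline{h(t)}$, $v=\overline{j(t)}$, the local identity
\eqns{
\sum_{k\ge0} (f\ast g)(t^k)\,\overline{(h\ast j)(t^k)} = \frac{1-abuv}{(1-au)(1-av)(1-bu)(1-bv)}.
}

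To establish this, I would use generating functions. The Cauchy product gives $\sum_{k\ge0}(f\ast g)(t^k)z^k = \frac{1}{(1-az)(1-bz)}$, and partial fractions yield $(f\ast g)(t^k) = \frac{a^{k+1}-b^{k+1}}{a-b}$ when $a\ne b$, and likewise $\overline{(h\ast j)(t^k)} = \frac{u^{k+1}-v^{k+1}}{u-v}$ when $u\ne v$. Multiplying these, expanding into four geometric series each with common ratio of modulus $<1$, and summing leaves $\frac{1}{(a-b)(u-v)}\bigl[\frac{au}{1-au} - \frac{av}{1-av} - \frac{bu}{1-bu} + \frac{bv}{1-bv}\bigr]$; simplifying the bracket and invoking the factorisation $a(1-bu)(1-bv) - b(1-au)(1-av) = (a-b)(1-abuv)$ collapses this to the right-hand side above. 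The degenerate cases $a=b$ or $u=v$ follow by analytic continuation: both sides are holomorphic in $(a,b,u,v)$ on the polydisc $\{\modu a,\modu b,\modu u,\modu v<1\}$, which contains our point $(f(t),g(t),\overline{h(t)},\overline{j(t)})$, and they agree on the dense open subset where $a\ne b$ and $u\ne v$. Comparing the Euler factors of the two sides then completes the proof.

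The computation is elementary throughout; the only delicate points are the convergence bookkeeping that licenses writing each side as an Euler product, and — the real crux — the algebraic collapse of the local factor, where noticing the cancellation $a(1-bu)(1-bv)-b(1-au)(1-av)=(a-b)(1-abuv)$ is exactly what makes the stated closed form appear.
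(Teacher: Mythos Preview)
Your argument is correct. You reduce the identity to an Euler-product comparison, and then verify the local factor
\[
\sum_{k\ge0}(f\ast g)(t^k)\,\overline{(h\ast j)(t^k)}
=\frac{1-abuv}{(1-au)(1-av)(1-bu)(1-bv)}
\]
by partial fractions and the key cancellation $a(1-bu)(1-bv)-b(1-au)(1-av)=(a-b)(1-abuv)$; the degenerate cases are handled by analyticity on the polydisc. The convergence bookkeeping you outline (in particular $f\ast g,h\ast j\in\eltwo$, so that the left-hand side is absolutely convergent and the Euler product applies) is accurate.

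The paper, however, takes a genuinely different route. It never passes to Euler products or local factors. Instead it expands $\ip{f\ast g,h\ast j}$ directly as a triple sum over $n$ and divisors $c,d\mid n$, rewrites the condition $c,d\mid n$ as $[c,d]\mid n$, substitutes $n=[c,d]m$, and uses complete multiplicativity together with $[c,d](c,d)=cd$ to peel off the factors $\ip{g,j}$ and $\ip{f,h}$, leaving a coprime sum $\sum_{(c',d')=1}f(c')g(d')\overline{h(d')j(c')}$. A parallel manipulation of $\ip{f,j}\ip{g,h}$ produces $\ip{fg,hj}$ times the \emph{same} coprime sum, and comparing the two gives the identity. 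Your approach trades that global gcd/lcm combinatorics for a prime-by-prime generating-function calculation: it is perhaps the more natural reflex in a completely multiplicative setting and makes the algebraic source of the denominator $\ip{fg,hj}$ transparent, at the cost of the extra convergence checks needed to justify the Euler products. The paper's argument is shorter on analytic overhead and, because it never factors over primes, would extend with little change to the $\Qplus$-indexed setting used elsewhere in the paper.
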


\begin{proof}[Proof of Theorem \ref{thm:boundedonmt}]
By taking $h=f$ and $g =j = x$ in (\ref{eqn:innerproductconvolution}), we have
\eqns{
\nm{\Df x}_2 = \frac{\nm{f}_2 \nm{x}_2 \modu{\ip{f,x}}}{\nm{fx}_2}  \leq \nm{f}_2  \nm{x}_2 \modu{\ip{f,x}},
}
as $f$ and $x$ are multiplicative, and as such we have $x_1=1$ and $f(1) = 1$, giving
\eqns{
\nm{fx}_2 = \inftysm{n=1} \modu{f(n)x_n}^2 \geq 1.
}
Now,
\eqnslab{\label{eqn:productofprimes}
\frac{\nm{\Df x}_2}{\nm{x}_p} &\leq  \frac{ \nm{f}_2 \nm{x}_2 \modu{\ip{f,x}}} {\nm{x}_p} = \nm{f}_2\prod_{t \in \Prime} \frac{\bracks{1-\modu{x_t}^p}^{\frac{1}{p}}}{\bracks{1-\modu{x_t}^2}^{\frac{1}{2}}\bracks{1-\modu{x_tf(t)}}},
}
where we made use of Euler products. Therefore, it remains to show that the product over primes is bounded independently of $x_t$. As $0 \leq \modu{x_t} < 1$, we can say that
\eqns{
\modu{x_t}^2 < \modu{x_t}^p \implies \frac{1}{1-\modu{x_t}^2} < \frac{1}{1-\modu{x_t}^p}.
}
Hence, the product of (\ref{eqn:productofprimes}) is at most
\eqns{
\prod_{t \in \Prime} \frac{\bracks{1-\modu{x_t}^p}^{\frac{1}{p}}}{\bracks{1-\modu{x_t}^p}^{\frac{1}{2}}\bracks{1-\modu{x_t f(t)}}} =  \prod_{t \in \Prime} \frac{\bracks{1-\modu{x_t}^p}^{\frac{2-p}{2p}}}{\bracks{1-\modu{x_t f(t)}}}.
}
By taking logarithms, we arrive at the equality
\eqns{
\log \bracks{\prod_{t \in \Prime} \frac{\bracks{1-\modu{x_t}^p}^{\frac{2-p}{2p}}}{\bracks{1-\modu{x_t f(t)}}}} =  \sum_{t \in \Prime} \bracks{\log \frac{1}{1-\modu{x_t f(t)}} - \frac{2-p}{2p}\log\frac{1}{1-\modu{x_t}^p} } .
}
Note in general for $a >0$, we have $ a \leq \log\bracks{\frac{1}{1-a}} = a + O\bracks{a^2}$. Hence,
\eqns{
\sum_{t \in \Prime} \log \bracks{\frac{1}{1-\modu{x_t}^p}} \geq \sum_{t \in \Prime} \modu{x_t}^p,
}
and moreover,
\eqns{
\sum_{t \in \Prime} \log \bracks{\frac{1}{1-\modu{x_t f(t)}}} = \sum_{t \in \Prime} \modu{x_t f(t)} + O(1),
}
where the $O(1)$ term is independent of the sequence $x_t$. Therefore, we obtain
\eqns{
&\sum_{t \in \Prime} \bracks{\log \frac{1}{1-\modu{x_t f(t)}} - \frac{2-p}{2p}\log\frac{1}{1-\modu{x_t}^p} } \\
 &<  \sum_{t \in \Prime} \bracks{\modu{x_t f(t)} - \frac{2-p}{2p}\modu{x_t}^p} + O(1).
}
Now, we consider the case when the terms of the above series are positive. In other words,
\eqns{
\modu{x_t f(t)} \geq \frac{2-p}{2p} \modu{x_t}^p \iff \bracks{\frac{2p}{2-p} \modu{f(t)} }^{\beta} \geq \modu{x_t},
}
where $\beta = \frac{1}{p-1}$. Hence, by only summing over the $t$ which yield positive terms, we have
\begin{eqnarray*}
& & \sum_{t \in \Prime} \bracks{\modu{x_t f(t)} - \frac{2-p}{2p}\modu{x_t}^p} < \sum_{\substack{t \text{ s.t} \\ \modu{x_t} \leq \bracks{\frac{2p}{2-p} \modu{f(t)}}^{\beta}}} \bracks{\modu{x_t f(t)} - \frac{2-p}{2p}\modu{x_t}^p }\\
& &\leq \sum_{\substack{t \text{ s.t} \\ \modu{x_t} \leq \bracks{\frac{2p}{2-p} \modu{f(t)}}^{\beta}}} \modu{x_t f(t)}
\leq \bracks{\frac{2p}{2-p}}^\beta \sum_{t \in \Prime}  \modu{f(t)}^\beta \modu{f(t)}.
\end{eqnarray*}
As $\beta + 1 = \frac{p}{p-1} > 2$, we see that
\eqns{
\sum_{t \in \Prime}  \modu{f(t)}^{\beta + 1} \leq \sum_{t \in \Prime}  \modu{f(t)}^{2}  < \infty,
}
as $f \in \Mctwo$. Hence,  the product in (\ref{eqn:productofprimes}) is bounded, which implies that the mapping $\Df : \Mcp \to \Mtwo$ is bounded.
\end{proof}

  \begin{proof}[Proof of Lemma \ref{lem:productconv}]
We start by computing the LHS of (\ref{eqn:innerproductconvolution}):
\begin{eqnarray*}
& & \ip{f\ast g, h \ast j} = \sum_{n \geq 1} (f\ast g)(n)\overline{(h \ast j) (n)} =
\sum_{n \geq 1} \sum_{c,d|n} f(c)g\left(\frac{n}{c}\right)\overline{ h(d)j\left(\frac{n}{d}\right) }\\
& & = \sum_{c,d \geq 1} \sum_{\substack{n \geq 1 \\ c,d|n}} f(c)g\left(\frac{n}{c}\right) \overline{h(d)j\left(\frac{n}{d}\right)}
= \sum_{c,d \geq 1} \sum_{\substack{n \geq 1 \\ [c,d]|n}} f(c)g\left(\frac{n}{c}\right) \overline{h(d)j\left(\frac{n}{d}\right) },
\end{eqnarray*}
since $c,d|n \iff [c,d]|n$. Now, as $[c,d]|n \iff n =[c,d]m$, the above is given by
\begin{align*}
&\sum_{c,d \geq 1} \sum_{m \geq 1} f(c)g\left(\frac{m[c,d]}{c}\right)\overline{ h(d)j\left(\frac{m[c,d]}{d}\right)}  \\
&=  \sum_{m \geq 1} g(m)\overline{j(m)}\sum_{c,d \geq 1}  f(c)g\left(\frac{[c,d]}{c}\right)\overline{ h(d)j\left(\frac{[c,d]}{d}\right)} \\
&=  \ip{g,j}\sum_{c,d \geq 1}  f(c)g\left(\frac{d}{(c,d)}\right)\overline{ h(d)j\left(\frac{c}{(c,d)}\right)}  \qquad \text{ as } [c,d](c,d)=cd \\
&=  \ip{g,j} \sum_{k \geq 1}\sum_{\substack{c,d \geq 1 \\ (c,d) = k}}  f(c)g\left(\frac{d}{(c,d)}\right) \overline{h(d)j\left(\frac{c}{(c,d)}\right) }.
\end{align*}
If $(c,d) = k$, then $c = c'k, d = d'k$ where $(c',d')=1$. Therefore,
\[
\ip{f\ast g, h \ast j} =  \ip{g,j} \sum_{k \geq 1}\sum_{\substack{c',d' \geq 1 \\ (c',d') = 1}}  f(c'k)g(d') \overline{ h(d'k)j(c')} \nn,\]
which is equal to
\begin{eqnarray}
& & \ip{g,j} \sum_{k \geq 1} f(k) \overline{h(k)}\sum_{\substack{c',d' \geq 1 \\ (c',d') = 1}}  f(c')g(d')\overline{ h(d')j(c') } \nn \\
& & =  \ip{g,j}\ip{f,h}\sum_{\substack{c',d' \geq 1 \\ (c',d') = 1}}  f(c')g(d')\overline{ h(d')j(c')}.  \label{eqn:innerproductLHS}
\end{eqnarray}
We now compute the RHS of (\ref{eqn:innerproductconvolution}). We have
\eqnslab{
\ip{f,j}\ip{g,h} &= \sum_{c,d \geq 1} f(c)\overline{j(c)}g(d)\overline{h(d)} = \sum_{k\geq 1}\sum_{\substack{c,d \geq 1 \\ (c,d)=k}} f(c)\overline{j(c)}g(d)\overline{h(d)} \nn \\
&= \sum_{k\geq 1}\sum_{\substack{c',d' \geq 1 \\ (c',d')=1}} f(c'k)\overline{j(c'k)}g(d'k)\overline{h(d'k)} \nn \\
&= \sum_{k\geq 1} f(k)\overline{j(k)}g(k)\overline{h(k)}\sum_{\substack{c',d' \geq 1 \\ (c',d')=1}} f(c')\overline{j(c')}g(d')\overline{h(d')} \nn \\
&= \ip{fg,hj}\sum_{\substack{c',d' \geq 1 \\ (c',d')=1}} f(c')\overline{j(c')}g(d')\overline{h(d')}.  \label{eqn:innerproductRHS}
}
Hence, by comparing (\ref{eqn:innerproductLHS}) with (\ref{eqn:innerproductRHS})  we obtain (\ref{eqn:innerproductconvolution}).
\end{proof}

Naturally one can ask if Theorem \ref{thm:boundedonmt} generalises to $\elp$. In other words: {\em is $f \in \eltwo$ a sufficient condition for  $\Df:\elp \to \eltwo$
to be bounded for every  $p$ in $(1,2)$?}
Theorem \ref{thm:boundedonmt} raises some interesting points of speculation regarding this question. It would perhaps be surprising if Theorem \ref{thm:boundedonmt} could not be generalised to $\Mf$ on $\elp$ as we know that in the edge cases, the operator norm is ``largest" when acting on multiplicative sequences. Why this would not also be true for the interior cases is unclear. In contrast, we know from Theorem \ref{thm:operatornormedgecases} that when $p=2$, $f \in \el^1$ is needed for boundedness. If a generalisation were possible, there would be a jump in the required value of $r$. That is, by considering $p = 2 -\epsilon$ for any $\epsilon >0$, $f \in \eltwo$ is all that is required. Why the jump between $f \in \el^1$ to $f \in \eltwo$ would occur is also unclear.
Finding a generalisation of Theorem \ref{thm:boundedonmt} has not been possible, and  leads to an investigation of a possible counterexample to the question
raised above.

\subsection*{A possible counterexample}

We wish to know, given $f \in \eltwo$, does there exist $x \in \elp$, for $p \in (1,2)$, such that $\frac{\nm{\Df x}_p}{\nm{x}_2}$  can be arbitrarily large? For simplicity, we choose $f(n) = \frac{1}{n^\alpha}$ with $\alpha > \frac{1}{2}$.

  \begin{prop} \label{thm:counterexample} Let $p \in (1,2)$, $q=2$, and $\alpha > \frac{1}{2}$. If $(x_n) \in \elp$ is a sequence such that
$x_n \ll {1}/{d(n)^{\frac{1}{2-p}}}$, then $\Da x \in \eltwo$.
  \end{prop}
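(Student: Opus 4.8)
The plan is to deduce from the two hypotheses a single weighted square-summability statement, namely $\sum_{n} d(n)\modu{x_n}^2 < \infty$, and then to estimate $\nm{\Da x}_2^2$ by a Cauchy--Schwarz argument in which the weight is chosen to be sensitive to the divisor structure of $n$.

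First I would combine the two hypotheses. Since $p \in (1,2)$, write $\modu{x_n}^2 = \modu{x_n}^p \modu{x_n}^{2-p}$; the assumption $x_n \ll d(n)^{-1/(2-p)}$ gives $\modu{x_n}^{2-p} \ll d(n)^{-1}$, hence $d(n)\modu{x_n}^2 \ll \modu{x_n}^p$, and summing over $n$ yields $\sum_n d(n)\modu{x_n}^2 \ll \nm{x}_p^p < \infty$. (Note that $x \in \elp$ alone already puts $x$ in $\eltwo$, but it is this stronger weighted bound that will be needed, and it is where the decay hypothesis is used.) Next, writing $y_n = (\Da x)(n) = \sum_{d \mid n} d^{-\alpha} x_{n/d}$, a finite sum with exactly $d(n)$ terms, I would apply Cauchy--Schwarz with the weight $1/d(n)$ attached to each term:
\[
\modu{y_n}^2 = \Bigl| \sum_{d\mid n} \tfrac{1}{\sqrt{d(n)}}\cdot \sqrt{d(n)}\,d^{-\alpha}x_{n/d}\Bigr|^2 \le \Bigl(\sum_{d\mid n}\tfrac{1}{d(n)}\Bigr)\Bigl(\sum_{d\mid n} d(n)\, d^{-2\alpha}\modu{x_{n/d}}^2\Bigr) = d(n)\sum_{d\mid n}\frac{\modu{x_{n/d}}^2}{d^{2\alpha}},
\]
the first factor being exactly $1$.

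Summing over $n$ and reindexing by $n = de$ (so that $d(n) = d(de)$), this gives
\[
\nm{\Da x}_2^2 \le \sum_{n} d(n)\sum_{d\mid n}\frac{\modu{x_{n/d}}^2}{d^{2\alpha}} = \sum_{d\ge 1}\frac{1}{d^{2\alpha}}\sum_{e\ge 1} d(de)\modu{x_e}^2 \le \sum_{d\ge 1}\frac{d(d)}{d^{2\alpha}}\sum_{e\ge1} d(e)\modu{x_e}^2,
\]
where I used the submultiplicativity $d(de)\le d(d)d(e)$. Since $\sum_{d\ge1} d(d) d^{-s} = \zeta(s)^2$ converges for $s = 2\alpha > 1$, and the $e$-sum is finite by the first step, one concludes $\Da x \in \eltwo$, with the explicit bound $\nm{\Da x}_2 \le \zeta(2\alpha)\bigl(\sum_n d(n)\modu{x_n}^2\bigr)^{1/2}$.

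The only real point to get right is the choice of weight in the Cauchy--Schwarz step. A naive Minkowski estimate (treating $\Da x$ as $\sum_d d^{-\alpha}$ times a shift of $x$) or Cauchy--Schwarz with a power weight $d^{-\beta}$ would both force $\alpha > 1$ — which is to be expected, since $\Da$ is unbounded on $\eltwo$ for $\alpha \le 1$ — so one must use a weight interacting with the number of divisors. Taking $1/d(n)$ makes the normalising factor $\sum_{d\mid n} 1/d(n)$ equal to exactly $1$, and pushes the entire cost into the factor $d(de)$, which then splits as $d(d)d(e)$: the $d$-part is summable precisely because $\alpha > 1/2$ (convergence of $\zeta(2\alpha)^2$), and the $e$-part is summable precisely because of the divisor-decay hypothesis on $x$. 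Everything else is routine reindexing.
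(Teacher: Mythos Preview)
Your proof is correct and follows essentially the same approach as the paper: the same Cauchy--Schwarz bound $\modu{y_n}^2 \le d(n)\sum_{d\mid n} d^{-2\alpha}\modu{x_{n/d}}^2$, the same reindexing and use of $d(de)\le d(d)d(e)$, and the same reduction to $\sum_n d(n)\modu{x_n}^2 \ll \sum_n \modu{x_n}^p < \infty$. The only difference is order of presentation (you establish the weighted square-summability first, whereas the paper derives it at the end).
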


  \begin{proof}
By the Cauchy-Schwarz inequality, we have
\eqns{
y_n^2 &= \bracks{\sum_{d \mid n} \frac{x_{\sfrac{n}{d}}}{d^\alpha}}^2 \leq \sum_{d \mid n }  1 \sum_{d \mid n }  \frac{x^2_{\sfrac{n}{d}}}{d^{2\alpha}} = d(n) \sum_{d \mid n} \frac{x^2_{\sfrac{n}{d}}}{d^{2\alpha}}.
}
So,
\eqns{
\nm{\Da x}_2^2 &\leq \inftysm{n=1} d(n) \sum_{d \mid n} \frac{x^2_{\sfrac{n}{d}}}{d^{2\alpha}} = \inftysm{d=1} \inftysm{m=1} d(md)  \frac{x_m^2}{d^{2\alpha}}  \quad \text{ by writing } dm=n \\
&\leq \inftysm{d=1}  \frac{d(d)}{d^{2\alpha}} \inftysm{m=1} d(m) x_m^2,
}
as $d(mn) \leq d(m)d(n)$. As $\alpha > \frac{1}{2}$, the first series on the RHS is convergent (and given by $\zeta(2\alpha)^{2}$). Hence, \eqns{\nm{\Da x}_2^2 \ll \inftysm{m=1} d(m) x_m^2.} This is convergent if $x_m^2 d(m)  \ll x_m^p$ (as $x \in \elp$). By rearranging, this is equivalent to $x_m \ll {1}/{d(m)^{\frac{1}{2 - p}}}$ as required.
\end{proof}

  From Proposition \ref{thm:counterexample}, we can conclude that any counterexample, say $x = (x_n)$, must satisfy $x_n > 1/d(n)^{\frac{1}{2 - p}}$ for infinitely many $n \in \N$.  As such we define
  \eqns{
  S = \curlybracks{n \in \N : x_n > {1}/{d(n)^{\frac{1}{2 - p}}} },
  }
  and we may assume that the support of $x$ is contained within the set $S$, i.e., $x_n = 0$ if $n \notin S$. However, some care must be taken in choosing $S$ (if an example is possible) as
  \eqnslab{\label{eqn:criteriaonsets}
  \sum_{n \in S }\frac{1}{d(n)^{\frac{p}{2 - p}}} \leq \sum_{n \in S } x_n^p < \infty,
  }
  must be satisfied as $x \in \elp$. First, $S$ must be a ``sparse" set; consider the function which counts the number of $n \in S$ below a given $x$, $S(x) = \sum_{\substack{n \leq x \\ n \in S}} 1$. Then
  \eqns{
  S(x) = \sum_{\substack{n \leq x \\ n \in S}} \frac{x_n^p}{x_n^p} \ll x^\epsilon \sum_{\substack{n \leq x \\ n \in S}} x_n^p \ll x^\epsilon \text{ for all } \epsilon > 0,}
as ${1}/{x_n^p} \leq d(n)^{\frac{p}{2-p}} \ll n^\epsilon \leq x^\epsilon$ for all $\epsilon > 0$. For example, choosing $S = \N$ fails. Secondly, $S$ must contain $n$ with large numbers of divisors, otherwise $1/d(n)^{\frac{p}{2-p}} \not \to 0$ as $n \to \infty$ and so (\ref{eqn:criteriaonsets}) will not be satisfied ($S$ can not be a subset of $\Prime$, for example). However, the following example indicates the difficulty of choosing $S$ to yield $\Da$ unbounded: define $S = \curlybracks{2^k : k \in \N}$. We see that (\ref{eqn:criteriaonsets}) is satisfied because
  \eqns{   \sum_{n \in S }\frac{1}{d(n)^{\frac{p}{2 - p}}}  = \inftysm{k=1 }\frac{1}{\bracks{k+1}^{\frac{p}{2 - p}}} < \infty
  \;\text{ as } \;\frac{p}{2-p} > 1 \;\text{ for } \; p \in (1,2).
  }
Now,
  \eqns{
  y_n = \sum_{2^k \mid n } \frac{2^{k\alpha}}{n^\alpha} x_d .
  }
  Write $n = 2^l m$ where $m$ is odd. Then
  \eqns{
  \bracks{y_{2^l m}}^2 &= \bracks{\sm{k=0}{l} \frac{x_{2^k}}{\bracks{2^{l-k} m} ^\alpha}}^2 = \frac{1}{m^{2\alpha}}\bracks{\sm{k=0}{l} \frac{x_{2^k}}{2^{\bracks{l-k}\alpha}}}^2 \\
  &= \frac{1}{m^{2\alpha}}\bracks{\sm{k=0}{l} \frac{x_{2^{l-k}}}{2^{k\alpha}}}^2  \text{ by writing } k \mapsto l - k \\
  &= \frac{1}{m^{2\alpha}} \bracks{\sm{k=0}{l} \frac{x_{2^{l-k}}}{2^{k(\alpha - \delta)}} \frac{1}{2^{k\delta}}}^2 \leq \frac{1}{m^{2\alpha}} \sm{k=0}{l}\bracks{ \frac{x_{2^{l-k}}}{2^{k(\alpha - \delta)}}}^2 \sm{k=0}{l} \frac{1}{2^{2k\delta}}  \\
  &\ll \frac{1}{m^{2\alpha}} \sm{k=0}{l}\bracks{ \frac{x_{2^{l-k}}}{2^{k(\alpha - \delta)}}}^2 .
  }
  We now sum over all $l$ and $m$,
  \eqns{
  \inftysm{l=1} \sum_{\substack{m \in \N \\ m \text{ odd}}}\bracks{y_{2^l m}}^2  &\ll \inftysm{l=1} \sum_{\substack{m \in \N \\ m \text{ odd}}} \frac{1}{m^{2\alpha}} \sm{k=0}{l}\bracks{ \frac{x_{2^{l-k}}}{2^{k(\alpha - \delta)}}}^2 \\
 &\leq \zeta(2\alpha) \inftysm{l=0} \sm{k=0}{l}\bracks{ \frac{x_{2^{l-k}}}{2^{k(\alpha - \delta)}}}^2 \\
  &\ll  \inftysm{k=0} \inftysm{l = 0}\bracks{ \frac{x_{2^{l}}}{2^{k(\alpha - \delta)}}}^2  = \inftysm{k=0} \frac{1}{2^{2k(\alpha - \delta)}} \inftysm{l = 0} x_{2^{l}}^2 ,
  }
 which is finite as $x \in \elp$.   The following Proposition suggests some further structure of $S$.
 \begin{prop} Let $\alpha > \frac{1}{2}$ and $\beta = \frac{p}{(2-p)(2\alpha -1)}$. Let $y= \gamma + \mu$ where $\gamma=(\gamma_n)$ and $\mu=(\mu_n)$ are given by
\label{thm:orderofxcounterexample}
\eqns{
\gamma_n = \sum_{\substack{d \mid n \\ \frac{n}{d} \in S \\ d \geq d(n)^\beta}}  \frac{x_{n/d}}{d^\alpha} \quad \text{ and } \mu_n = \sum_{\substack{d \mid n \\ \frac{n}{d} \in S  \\ d < d(n)^\beta}} \frac{x_{n/d}}{d^\alpha} .
}
Then $\gamma \in \eltwo$.
\end{prop}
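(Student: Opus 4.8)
The plan is to bound $\inftysm{n=1}\gamma_n^2$ directly, exploiting the fact that only divisors $d$ with $d \ge d(n)^\beta$ contribute to $\gamma_n$; this lower bound on $d$ will compensate, and in fact exactly cancel, the loss incurred in a Cauchy--Schwarz step. Set $c := 2\alpha - \tfrac{1}{\beta}$, noting that $\beta>0$ since $p\in(1,2)$ and $\alpha>\tfrac12$. First I would apply the Cauchy--Schwarz inequality with all weights equal to $1$, using that the index set has at most $d(n)$ elements, to get
\[
\gamma_n^2 \;\le\; d(n)\sum_{\substack{d\mid n\\ n/d\in S\\ d\ge d(n)^\beta}}\frac{x_{n/d}^2}{d^{2\alpha}} .
\]
For each $d$ in this range, $d\ge d(n)^\beta$ gives $d^{1/\beta}\ge d(n)$, so I would write $d^{-2\alpha}=d^{-1/\beta}d^{-c}\le d(n)^{-1}d^{-c}$; the factor $d(n)^{-1}$ then cancels the $d(n)$ coming from Cauchy--Schwarz, leaving
\[
\gamma_n^2 \;\le\; \sum_{\substack{d\mid n\\ n/d\in S}}\frac{x_{n/d}^2}{d^{c}} .
\]

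Next I would sum over $n$ and substitute $n = dm$ with $m\in S$, which (by Tonelli, all terms being nonnegative) yields
\[
\inftysm{n=1}\gamma_n^2 \;\le\; \bracks{\inftysm{d=1}\frac{1}{d^{c}}}\bracks{\sum_{m\in S}x_m^2} \;=\; \zeta(c)\sum_{m\in S}x_m^2 .
\]
The second factor is finite because $x\in\elp$ with $p<2$, hence $x\in\eltwo$. The first factor is finite precisely when $c>1$, and this is exactly where the prescribed value of $\beta$ is essential: a short computation gives
\[
c \;=\; 2\alpha - \frac{1}{\beta} \;=\; \frac{4\alpha p - 4\alpha - p + 2}{p},
\]
so that $c>1 \iff 4\alpha(p-1) > 2(p-1) \iff \alpha > \tfrac12$, using $p>1$. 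Therefore $\inftysm{n=1}\gamma_n^2 \le \zeta(c)\sum_{m\in S}x_m^2 < \infty$, i.e.\ $\gamma\in\eltwo$.

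I do not anticipate a genuine analytic obstacle: the estimate is a weighted Cauchy--Schwarz argument in the same spirit as Proposition \ref{thm:counterexample}, except that here the crude bound $d(mn)\le d(m)d(n)$ is replaced by the exact cancellation afforded by the threshold $d(n)^\beta$. The point requiring care --- and the reason the splitting of $y$ is performed precisely at $d=d(n)^\beta$ with this particular $\beta$ --- is the bookkeeping that makes the single exponent $c=2\alpha-1/\beta$ simultaneously (i) the value for which the two powers of $d(n)$ cancel and (ii) strictly larger than $1$; verifying that (ii) is equivalent to the standing hypothesis $\alpha>\tfrac12$ (given $p\in(1,2)$) is the crux of the proof.
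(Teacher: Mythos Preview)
Your argument is correct, but it proceeds differently from the paper's own proof. The paper applies Cauchy--Schwarz in the form
\[
\gamma_n^2 \;\le\; \Bigg(\sum_{\substack{d\mid n,\ d\in S\\ d\le n/d(n)^\beta}} x_d^2\Bigg)\Bigg(\sum_{\substack{d\mid n,\ d\in S\\ d\le n/d(n)^\beta}} (d/n)^{2\alpha}\Bigg)
\;\ll\; \sum_{\substack{d\mid n,\ d\in S\\ d\le n/d(n)^\beta}} (d/n)^{2\alpha},
\]
sums over $n$, uses the tail bound $\sum_{m>M} m^{-2\alpha}\ll M^{1-2\alpha}$ with $M=d(d)^\beta$, and finishes via the \emph{defining} property of $S$, namely $\sum_{n\in S} d(n)^{-p/(2-p)} \le \sum_{n\in S} x_n^p<\infty$. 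Your route instead bounds the number of terms by $d(n)$, uses the threshold $d\ge d(n)^\beta$ to convert $d(n)\,d^{-2\alpha}\le d^{-c}$ with $c=2\alpha-1/\beta$, and then sums to $\zeta(c)\|x\|_2^2$, checking that $c>1$ is equivalent to $\alpha>\tfrac12$.

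The trade-off: your argument is shorter and more self-contained --- it never invokes the lower bound $x_n>d(n)^{-1/(2-p)}$ on $S$, only $x\in\elp\subset\eltwo$, so it in fact proves a slightly stronger statement (valid for any $x\in\eltwo$ once $\gamma$ is defined). The paper's argument, by routing through $\sum_{n\in S} d(n)^{-p/(2-p)}$, ties the estimate back to the structural condition on $S$ that motivates the whole discussion, making clearer why this particular $\beta$ is the natural threshold for the decomposition $y=\gamma+\mu$.
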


\begin{proof}
By the Cauchy-Schwarz inequality,
\eqns{
\gamma_n^2   &= \Bigg ( \sum_{\substack{ d \mid n \\ \frac{n}{d} \in S  \\ d \geq d(n)^\beta }}  \frac{x_{n/d}}{d^\alpha} \Bigg )^2 = \Bigg (\sum_{\substack{ d \mid n \\ d \in S  \\ d \leq \frac{n}{d(n)^\beta} }} x_{d} \bracks{\frac{d}{n}}^\alpha \Bigg ) ^2 \\
&\leq \sum_{\substack{ d \mid n \\ d \in S  \\ d \leq \frac{n}{d(n)^\beta}}} x_{d}^2  \sum_{\substack{ d \mid n \\ d \in S  \\ d \leq \frac{n}{d(n)^\beta}}} \bracks{\frac{d}{n}}^{2\alpha} \ll \sum_{\substack{ d \mid n \\ d \in S  \\ d \leq \frac{n}{d(n)^\beta}}} \bracks{\frac{d}{n}}^{2\alpha},
}
as $x \in \eltwo$. Therefore,
\eqns{
\inftysm{n=1} \gamma_n^2 &\ll \inftysm{n=1} \sum_{\substack{ d \mid n \\ d \in S  \\ d \leq \frac{n}{d(n)^\beta}}} \bracks{\frac{d}{n}}^{2\alpha} \leq \sum_{d \in S} \sum_{\substack{m \geq 1 \\ d(dm)^\beta < m}} \frac{1}{m^{2\alpha}} \\
&\leq \sum_{ d \in S } \sum_{d(d)^\beta< m} \frac{1}{m^{2\alpha}} \ll \sum_{ d \in S} \frac{1}{d(d)^{\beta\bracks{2\alpha -1}}},
}
as, for $s > 1$,  \eqns{
\sum_{n > m } \frac{1}{n^s} \ll m^{1-s},
}
(see \cite{Apostol1976}, page 55). By assumption, we have

\eqns{
\sum_{ n \in S} \frac{1}{d(n)^{\beta\bracks{2\alpha -1}}} = \sum_{n \in S }\frac{1}{d(n)^{\frac{p}{2 - p}}} \leq \sum_{n \in S } x_n^p < \infty,
}
as required.
  \end{proof}

From Proposition \ref{thm:orderofxcounterexample}, we can see that any counterexample must yield $\mu \not \in \eltwo$. This suggests that $S$ must contain $n \in \N$ such that $n$ has a large number of small divisors so that $d < d(n)^\beta$ is satisfied often and in turn ensuring that many divisors contribute to the summation. The investigation of finding a suitable support set $S$ has not yet yielded $\mu \not \in \eltwo$, and this gives little indication of a successful counterexample. The lack of existence of either a generalisation of Theorem \ref{thm:boundedonmt} or a counterexample demonstrates perhaps the challenging nature of this problem and leaves further open questions regarding the boundedness of multiplicative Toeplitz operators.

\subsection*{Open questions}

We conclude this paper by summarising the open problems that have risen from our discussion. \begin{itemize}
\item Is $f \in \el^r$ a necessary condition for $\Df : \elp \to \elq$ to be bounded for any $p$ and $q$?
\item Can we generalise Theorem \ref{thm:boundedonmt} from multiplicative subsets to the mapping $\Df: \elp \to \eltwo$? Or can we find a counterexample to this?
\end{itemize}
Finally, we give some further open questions regarding multiplicative Toeplitz operators which we have not discussed in this paper.

\begin{itemize}
\item What is the operator norm when $f$ can take negative values? Does it mimic that given in \cite{Hilberdink2015}?
\item Can we compute the spectrum of $\Mf$? Does $\Mf$ have any eigenvalues and if so what are they?
\item For which $f$ is $\Mf$ Fredholm, and can we describe the essential spectrum of $\Mf$?
\end{itemize}

\addressprint

\end{document}